\newtheorem{definition}{Definition}
\newtheorem{lemma}{Lemma}
\newtheorem{proposition}{Proposition}
\newtheorem{theorem}{Theorem}
\newtheorem{corollary}{Corollary}
\newtheorem{example}{Example}
\newtheorem*{flowbox}{The \emph{flow-box} theorem}
\def\sideremark#1{\ifvmode\leavevmode\fi\vadjust{\vbox to0pt{\vss 
      \hbox to 0pt{\hskip\hsize\hskip1em           
 \vbox{\hsize2cm\tiny\raggedright\pretolerance10000
 \noindent #1\hfill}\hss}\vbox to8pt{\vfil}\vss}}}%
\begin{document}

\title[box dimension, saddle loop]{Box dimension of a hyperbolic saddle loop}
\address{University of Zagreb, Faculty of Electrical Engineering and Computing, Department of Applied Mathematics, Unska 3, 10000 Zagreb, Croatia}
\email{maja.resman@fer.hr}

\author{Maja Resman}
\thanks{This work was partially supported by the Croatian Science Foundation project IP-2014-09-2285.}

\begin{abstract}
We compute the box dimension of a spiral trajectory around a hyperbolic saddle loop, as the simplest example of a hyperbolic saddle polycycle. In cases of weak foci and limit cycles, \v Zubrini\' c and \v Zupanovi\' c show that the box dimension of a spiral trajectory is in a bijective correspondence with cyclicity of these sets. We show that, in saddle loop cases, the box dimension is related to the cyclicity, but the correspondence is not bijective.

In addition, complex saddles are complexifications of weak foci points, as well as of hyperbolic saddles. Computing the box dimension around the saddle point of a hyperbolic saddle loop is hopefully a preliminary technique for computing the box dimension of leaves of a foliation around resonant complex saddles.
\end{abstract}

\maketitle
Keywords: box dimension, hyperbolic saddle loop, cyclicity

MSC 2010: 37C10, 28A75, 37C27, 37C29

\bigskip 

\section{Introduction}\label{one}

\subsection{Motivation}\label{oneone}
Elementary \emph{limit periodic sets} include weak foci, limit cycles and hyperbolic saddle polycycles. For a good overview of what is known about the cyclicities of these sets, see the book of Roussarie \cite{roussarie}.

The overall use of fractal dimensions in dynamics is explained in short in e.g. \cite{encikl}. One of the fractal dimensions is the \emph{box dimension}. There is a relation between the box dimension and the cyclicity of limit periodic sets, noted by a group of authors in \cite{neveda}, \cite{buletin}, \cite{belgproc}. It was shown that the \emph{box dimension} of either a spiral trajectory in the plane or of a discrete orbit of the Poincar\' e map on the real line uniquely reveals the cyclicity of a limit cycle or a weak focus (in generic unfoldings). Box dimension of a trajectory is closely related to the asymptotic behavior of the Lebesgue measure of the $\varepsilon$-neighborhood of the trajectory, as $\varepsilon\to 0$, see Section~\ref{onetwo} below for the precise definition. Only one (any) spiral trajectory is needed. 

In \cite{belgproc}, box dimension of spiral trajectories around \emph{weak foci} and \emph{limit cycles} was computed. The box dimension was computed applying either \emph{the flow-box theorem} \cite{dumortier} or \emph{the flow-sector theorem} \cite{belgproc}, which state a locally parallel structure of the trajectory at a transversal to the set. Box dimension is computed as Cartesian product dimension, using the results about the box dimension of the one-dimensional orbit of the Poincar\' e map given in \cite{neveda}. The box dimension of a spiral trajectory is thus brought in a bijective correspondence with cyclicity in generic bifurcations.

Our study here is concerned with relating geometric, fractal properties of one trajectory around the loop with the cyclicity of the loop. In this paper, we compute the box dimension of a spiral trajectory around simplest hyperbolic saddle polycycles, hyperbolic loops, and compare it with cyclicity results. The descripton of bifurcations and the cyclicity of homoclinic sets, including hyperbolic saddle loops, is of interest in many applications, see the book of Palis, Takens \cite{PT}. In \cite{roussarie} or \cite{joyal}, the Poincar\' e maps of hyperbolic saddle polycycles are described as having an expansion in a Chebyshev power-log scale. The cyclicity was related to the first non-zero coefficient in the expansion called the \emph{loop quantity}, which is the analogon of Lyapunov coefficient for focus points. 

In cases of limit cycles and foci, the Poincar\' e map has the same asymptotics on every transversal to the set. Here we show that the Poincar\' e map at any transversal through the saddle vertex has a different asymptotics than Poincar\' e maps at other transversals to the loop. Therefore, to compute box dimension of the spiral trajectory, we develop a new version of the flow-box theorem that \emph{identifies} the trajectory around a saddle point with an appropriate \emph{set of hyperbolas}. 

Furthermore, the Poincar\' e map of limit cycles and foci is differentiable at zero. That is, their asymptotic expansion is a formal Taylor series. The Poincar\' e maps of saddle polycycles expand in nondifferentiable real power-log Chebyshev scales, see \cite{roussarie}, or \cite{mardesic} for Chebyshev scales. We show here that, unlike in differentiable cases from \cite{belgproc}, in nondifferentiable saddle-loop cases the box dimension is not sufficient for recognizing cyclicity. By its definition, the box dimension compares the area of the $\varepsilon$-neighborhood to a power scale. It is implicitely shown in this paper that, in nondifferentiable cases, the areas of the $\varepsilon$-neighborhoods of trajectories are not of power-type behavior. To get precise information, we need beforehand a finer logarithmic scale, depending on the bifurcation.
The box dimension of a whole trajectory thus reflects the same deficiency as the box dimension of one-dimensional orbits of the Poincar\' e map treated in \cite{cheby}: it cannot recognize between two neighboring cyclicities.

\smallskip
We expect future applications of the result to be twofold. First, a similar technique can be adapted to computing the box dimension of spiral trajectories around \emph{degenerate and nilpotent foci}. As in the saddle loop case, the asymptotics of the Poincar\' e map is dependent on the choice of the transversal. That is, there exist at least two distinct asymptotics.

Secondly, planar fractal analysis obviously distinguishes between\linebreak strong and weak foci, see \cite{buletin}, \cite{cheby}, but fails in distinguishing between hyperbolic saddle points. Namely, such points are not monodromic, trajectories are not recurring and the Poincar\' e map is not well-defined. Resonant complex saddles are complexifications of weak foci points, as well as of hyperbolic saddles. They are monodromic. Their monodromy maps are called \emph{holonomy maps}, see e.g. \cite{ilya},\ \cite{loray}. The holonomy maps are complex parabolic germs defined on two-dimensional transversals. The box dimension of such germs was computed in \cite{resman}. By \cite{ilya}, the structure is again locally parallel, except at the saddle vertex. Computing the box dimension of planar hyperbolic saddle loops is hopefully a preliminary technique for computing the box dimension of a leaf of a foliation around a complex saddle point. The box dimension is expected to reveal the formal type of the complex saddle. For our detailed conjecture on the box dimension of a complex resonant saddle and its relation to saddle formal invariants, see \cite[Chapter 2]{thesis}.
\smallskip

\emph{Overview of the article. }In Section~\ref{one}, we state definitions and known results that we will use in the article. Section~\ref{main} is dedicated to the two main results, Theorem~\ref{saddlepoinc} and Theorem~\ref{codim}. Theorem~\ref{saddlepoinc} describes the asymptotic behaviour of the Poincar\' e map on a transversal through the hyperbolic loop vertex. Theorem~\ref{codim} gives the box dimension of a hyperbolic saddle loop. The results are proven in Section~\ref{three}. In Section~\ref{four}, we discuss two main applications. Subsection~\ref{fourone} explains the relation between the box dimension and cyclicity for a hyperbolic saddle loop. In Subsection~\ref{fourtwo}, we give a conjecture on the box dimension of a leaf of a foliation at a resonant complex saddle, related to the first formal invariant of the saddle.

\subsection{Definitions and notation}\label{onetwo}
Let $X$ be an analytic planar vector field with a monodromic hyperbolic saddle connection. That is, two eigenvalues of the linear part at the saddle point are of the opposite sign. After some rotation/translation changes of variables and possibly rescaling the time variable we can, without loss of generality, suppose that the loop is attracting, that the saddle lies at the origin and its separatrices correspond to the coordinate axes. The operations mentioned above do not change the phase portrait (only rotate and translate the loop), and therefore the box dimension of trajectories remains the same. Locally at the origin, the vector field is of the following form:
\begin{align*}
\begin{cases}
\dot x&= x+P(x,y),\\
\dot y&=-r\cdot y+Q(x,y).
\end{cases}
\end{align*}
Here, \footnote{In the case $r<1$, the loop is \emph{repelling}, so we rescale the time variable $t\leftrightarrow -rt$ and change the role of $x$ and $y$. In this way, we get an \emph{attracting} loop with $r>1$ and the phase portrait remains the same.}$r\geq 1$ is the hyperbolicity ratio and $P$,\ $Q$ are analytic functions of higher order than linear. 

If $r\in \mathbb{Q}_+^*$, the saddle is called \emph{resonant}. We put $r=p/q$, where $p,\ q\in\mathbb{N}$, $(p,q)=1$. In the case $r\in\mathbb{R_+^*}\setminus \mathbb{Q}$, the saddle is \emph{nonresonant}.

\smallskip
Let $\tau_1\equiv \{y=1\}$ and $\tau_2\equiv \{x=1\}$ represent the horizontal and the vertical transversal, parametrized so that the origin lies on the loop. Let $P(s)$ denote the Poincar\' e map on any transversal $\tau$ not passing through the origin. The asymptotics (as $s\to 0$) of Poincar\' e map is well-known and can be found in e.g. \cite[Sections 5.1.3,\ 5.2.2]{roussarie}\footnote{The notation $\sim$ stands for: $$
f\sim g,\text{ as $x\to 0$, if}\lim_{x\to 0}\frac{f(x)}{g(x)}=L,\ L\neq 0.$$}:
\begin{equation}\label{poinc}
\begin{cases}
P(s)-s\sim s^k
\text { or }s^k(-\log s),\ k\geq 2,\ k\in\mathbb{N},&\text{ if $r=1$},\\
P(s)\sim s^r,&\text{ if $r>1$}.\end{cases}
\end{equation}

The Poincar\' e map $s\mapsto P(s)$ is a composition of the Dulac map $s\mapsto D(s)$ around the saddle and the regular transition map around the loop $s\mapsto R(s)$. The leading term, if logarithmic $s^k(-\log s)$, stems from the Dulac map, whereas the leading regular term $s^k$ stems from the regular loop transition map.

The notion of \emph{codimension} of the saddle loop is taken from \cite[Definition 27]{roussarie}, as the number of the conditions imposed on the loop.
In the case $r=1$, the saddle loop is said to be of \emph{codimension $2k$} if $s-P(s)\sim s^k$, and of \emph{codimension $2k+1$} if $s-P(s)\sim s^{k+1}(-\log s)$, for some $k\geq 1$. If $r\neq 1$, the saddle loop is of \emph{codimension $1$}.
\smallskip

In our computations in Section~\ref{main}, we use \emph{the flow-box theorem} from \cite[p.75]{kuzne} or \cite{dumortier}:

\begin{flowbox}
Let us consider a planar vector field of class $C^1$. Assume that $U\subset\mathbb{R}^2$ is a closed set the boundary of which is the union of two trajectories and two curves transversal to trajectories. If $U$ is free of singularities and periodic orbits, then the vector field restricted to U is diffeomorphically conjugated to the field:
\begin{align*}
\begin{cases}
\dot{x}=1,\\
\dot{y}=0,
\end{cases}
\end{align*}
on the unit square $\{(x, y) |\ 0\leq x,y\leq 1 \}$. That is, the flow on $U$ can be represented as a parallel flow.
\end{flowbox}

We close the section with definitions of lower and upper box dimension and some properties used in the article. Let $U\subset\mathbb{R}^N$ and its $\varepsilon$-neighborhood, denoted $U_\varepsilon$, both be Lebesgue measurable. We denote the Lebesgue measure of the $\varepsilon$-neighborhood by $|U_\varepsilon|$. The \emph{lower box dimension} $\underline\dim_B(U)$ of $U$ is defined as
\begin{align*}
\underline \dim_B(U)=\inf\{s\geq 0\,|\,\liminf_{\varepsilon\to 0}\frac{|U_\varepsilon|}{\varepsilon^{N-s}}=0\}.
\end{align*}
Analogously, we define the \emph{upper box dimension} of $U$, $\overline \dim_B(U)$, with $\limsup$ in the above formula.
If $\underline\dim_B(U)=\overline\dim_B(U)$, then $$\dim_B(U)=\underline\dim_B(U)=\overline\dim_B(U)$$ is called the \emph{box dimension} of $U$.

The upper and the lower box dimension satisfy the following properties that can be looked up in e.g. \cite{tricot} or \cite{falconer}. The upper (lower) box dimension of the \emph{Cartesian product} is the sum of the box dimensions of factors. The upper box dimension satisfies \emph{the finite stability property}: the box dimension of the finite union is equal to the biggest of dimensions. Furthermore, they both satisfy \emph{monotonicity property}: the dimension of a subset is smaller than or equal to the dimension of the whole set. The upper (lower) box dimension of an image of a set under a \emph{Lipschitz map} is smaller than or equal to the box dimension of the original. In particular, box dimension is invariant to \emph{bilipschitz transformations}.

\section{Main results}\label{main}
In Theorem~\ref{codim} we state the main result, the box dimension of a hyperbolic saddle loop. Theorem~\ref{saddlepoinc} and Lemmas~\ref{fh} and \ref{dim} are used in the proof, but they are also independent results. All proofs are in Section~\ref{three}.
\smallskip

Recall the asymptotics of the Poincar\' e map on a \emph{transversal to the hyperbolic saddle loop not passing through the saddle} from \cite{roussarie}, see \eqref{poinc} in Section~\ref{one}. The asymptotics by definition determines the box dimension of the orbit. Compare it to Theorem~\ref{saddlepoinc}. In \cite{cheby}, it was shown that the box dimension of an orbit of the Poincar\' e map on a transversal away from the vertex does not reveal the codimension of the loop and the cyclicity uniquely. Here we compute the asymptotics on a transversal \emph{through the saddle}. We see that, although the scale is slightly different, the information carried in the asymptotics is the same.  

\begin{theorem}[Poincar\' e map on a transversal through the saddle]\label{saddlepoinc}
Let $\tau$ be a transversal through the saddle of a hyperbolic saddle loop of codimension $k\in\mathbb{N}$. Then the Poincar\' e map $s\mapsto P(s)$ defined on $\tau$ has the following asymptotics, as $s\to 0$:
\begin{equation}\label{lp}
s-P(s)\sim\begin{cases}
s^r,\ r>1 \text{ the ratio of hyperbolicity},& \text{if $k=1$,}\\
s^{k-1},&\text{if $k$ even,}\\
s^{k}(-\log s),&\text{if $k$ odd, $k>1$.}
\end{cases}
\end{equation}
\end{theorem}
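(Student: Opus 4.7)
The plan is to write the Poincar\'e map $P$ on $\tau$ as a composition $P=\Delta_2\circ R\circ\Delta_1$, where $\Delta_1,\Delta_2$ are two Dulac-type passages through a small box around the saddle and $R$ is the regular transition along the rest of the loop, and then to extract the leading term of $s-P(s)$ from the composition. Concretely, I introduce the auxiliary transversals $\tau_1=\{y=1\}$ and $\tau_2=\{x=1\}$ from Section~\ref{onetwo}. An orbit starting at a point of $\tau$ at parameter value $s$ close to the saddle first leaves a small box around the saddle through $\tau_2$, yielding $\Delta_1:\tau\to\tau_2$; then travels along the loop via the analytic diffeomorphism $R:\tau_2\to\tau_1$ with $R(0)=0$ and $R'(0)>0$; and finally re-enters the saddle box through $\tau_1$ to hit $\tau$ again, yielding $\Delta_2:\tau_1\to\tau$.

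Next I would invoke the asymptotics of the Dulac map of a hyperbolic saddle from \cite{roussarie}. The passages $\Delta_1$ and $\Delta_2$ are obtained by integrating the saddle flow between transversals and are directly analogous to the standard Dulac map between $\tau_1$ and $\tau_2$, so they inherit the same type of expansion. For $r>1$ they behave as power maps (in the linearised model, with $\tau$ transverse to both separatrices, one gets $\Delta_1(s)\sim s^{r+1}$ and $\Delta_2(x)\sim x^{r/(r+1)}$, up to constants depending on the angle at which $\tau$ meets the saddle). For $r=1$ they expand in the Chebyshev power-log scale $\{s^i(-\log s)^j\}$ of \cite{mardesic}, with the leading non-identity term governed by the codimension of the loop and by the loop quantity. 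Since $R$ is analytic with $R(0)=0$ and $R'(0)\neq 0$, composition with $R$ preserves the form of these expansions, up to constants and strictly higher-order perturbations.

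Composing the three factors and comparing the leading monomials then gives \eqref{lp}, case by case. For $k=1$ (so $r>1$) both $\Delta_1$ and $\Delta_2$ are pure power maps and composition produces a single leading power contributing $s-P(s)\sim s^r$. For even $k=2j$ the leading non-identity correction in $\Delta_1$ and $\Delta_2$ is a power, and after composition the dominant contribution to $s-P(s)$ is of order $s^{k-1}$. For odd $k=2j+1$ with $k>1$ a single logarithmic monomial produced by one of the Dulac-type passages survives the composition and yields $s^k(-\log s)$. The hard part is the bookkeeping of the Chebyshev expansion through the double saddle passage: one must verify that $\Delta_1$ and $\Delta_2$ do not cancel each other's leading corrections, that their joint correction dominates the higher-order contributions coming from $R$, and, in the odd-codimension case, that the logarithmic monomial really survives composition rather than being absorbed into a pure power term.
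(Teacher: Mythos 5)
Your decomposition is the same as the paper's: you write the first-return map to the diagonal transversal as (half Dulac passage $\tau\to\{x=1\}$) followed by the regular transition $R:\{x=1\}\to\{y=1\}$ followed by (half Dulac passage $\{y=1\}\to\tau$), and for $r>1$ in the linearizable case your exponents $\Delta_1(s)\sim s^{r+1}$, $\Delta_2(t)\sim t^{r/(r+1)}$ agree with the paper's $D_2$, $D_1$ and correctly compose to $s^r$. The problem is that in the case $r=1$, which is where all the content of the theorem lies, you never actually compute anything: you assert that the composition ``gives \eqref{lp} case by case'' and then explicitly defer the verification as ``the hard part.'' That verification \emph{is} the proof. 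The paper does it by passing to the coordinate $u=xy$ of the Roussarie normal form, where the transition in time $\omega$ is $u(\omega,s)=s+as^k\omega+h.o.t.$, and extracting the explicit leading behaviour of the two half-passages through the diagonal: $D_2(s)=\tfrac{s^2}{2}+\tfrac{a}{2^{k}}s^{2k}(-\ln s)+h.o.t.$ and $D_1(t)=\sqrt{2}\,t^{1/2}+\tfrac{a}{2\sqrt2}\,t^{k-1/2}(-\ln t)+h.o.t.$ It is precisely the quadratic/square-root leading terms of these half-passages that transform every correction monomial of order $m$ on a regular transversal into one of order $2m-1$ on the diagonal, which is exactly how codimension $2j$ (regular asymptotics $s^{j}$) becomes $s^{2j-1}=s^{k-1}$ and codimension $2j+1$ (regular asymptotics $s^{j+1}(-\log s)$) becomes $s^{2j+1}(-\log s)=s^{k}(-\log s)$. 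Without deriving this exponent map $m\mapsto 2m-1$, the claimed exponents $k-1$ and $k$ in your cases are unsupported.

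Two further points. First, your sentence ``for even $k$ the leading non-identity correction in $\Delta_1$ and $\Delta_2$ is a power'' misplaces where the correction comes from: for $r=1$ the half-Dulac passages always carry a \emph{logarithmic} correction $\sim s^{2k}(-\ln s)$; the dichotomy even/odd codimension is governed by whether the power correction $a_2s^{l}$ of the regular map $R$ dominates that logarithmic term or not (this is the $\min$ in the lexicographic order used in the paper's formulas for $P_1$ and $P$). Cancellation between $\Delta_1$ and $\Delta_2$ is not the issue; dominance between the $R$-term and the Dulac log-term is. Second, for $k=1$ your argument only covers the nonresonant (linearizable) saddle; when $r=p/q>1$ is resonant the field need not be linearizable, and one must run the same computation in the coordinate $u=x^py^q$ of the resonant normal form to conclude $D_2(s)\sim s^{r+1}$, $D_1(t)\sim t^{r/(r+1)}$ and hence $P(s)\sim s^r$. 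That case should be treated explicitly rather than subsumed under ``the linearised model.''
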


\begin{corollary}\label{kor}
The box dimensions of orbits of the Poincar\' e maps on a \emph{transversal away from the saddle vertex} and \emph{through the vertex} are equal to, respectively,
$$
\begin{array}{ll}
\dim_B =\begin{cases}
1-\frac{2}{k}, & \text{$k$ even,}\\
1-\frac{2}{k+1}, & \text{$k$ odd}.
\end{cases}
&\dim_B =\begin{cases}
1-\frac{1}{k-1}, & \text{$k$ even,}\\
1-\frac{1}{k}, & \text{$k$ odd}.
\end{cases}
\end{array}
$$
\end{corollary}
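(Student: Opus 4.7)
The plan is to invoke the \emph{asymptotics-to-box-dimension} dictionary of \cite{neveda}: for a contracting germ $P$ fixing $0$ with $s - P(s) \sim s^\alpha$ or $s^\alpha(-\log s)^\beta$ for some $\alpha > 1$, every orbit $\{P^n(s_0)\}_{n\geq 0}$ has box dimension $1 - 1/\alpha$, the logarithmic factor being inert.

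With this dictionary in hand, both halves of the corollary reduce to substitution. For a transversal \emph{away} from the vertex, I would substitute \eqref{poinc} case by case. Writing codimension as $k$, the even case $k = 2m$ has $s - P(s) \sim s^m$, so $\alpha = k/2$ and the dimension is $1 - 2/k$. The odd case $k = 2m+1$ with $m \geq 1$ has $s - P(s) \sim s^{m+1}(-\log s)$, giving $\alpha = (k+1)/2$ and dimension $1 - 2/(k+1)$. The boundary case $k = 1$ (nonresonant, $P(s) \sim s^r$ with $r > 1$) is hyperbolic, so the orbit contracts super-exponentially and has dimension $0$, matching $1 - 2/(k+1)$ at $k = 1$. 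For a transversal \emph{through} the vertex, I would instead substitute the asymptotics of Theorem~\ref{saddlepoinc}: $k$ even gives $\alpha = k - 1$ and dimension $1 - 1/(k-1)$, while $k$ odd with $k > 1$ gives $\alpha = k$ (with an inert logarithm) and dimension $1 - 1/k$.

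The only subtlety, and the place a proof could slip, is the inertness of the logarithmic factor in the dictionary. I would verify this by comparing the orbit to the model sequence produced by the autonomous approximation $ds/dn = -s^\alpha(-\log s)^\beta$, whose solution scales as $n^{-1/(\alpha-1)}$ up to a power of $\log n$; since the log correction is sub-polynomial, Tricot's formula for monotone null sequences returns the same exponent $1 - 1/\alpha$ as in the purely polynomial case. Once this is granted, the corollary is a direct reading of exponents in the two asymptotic lists, and the main bookkeeping task is to keep track of the translation between Roussarie's index $m$ and the codimension $k = 2m$ or $k = 2m+1$, where a small shift could easily produce a misprint in the final formulas.
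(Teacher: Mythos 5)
Your proposal is correct and follows essentially the same route as the paper: the author's proof is a one-line reduction to Theorem~\ref{saddlepoinc} (resp.\ \eqref{poinc}) combined with the dictionary of Example~\ref{staro}, i.e.\ $\dim_B S = 1-1/\alpha$ for $s-P(s)\sim s^\alpha$ or $s^\alpha(-\log s)$ (citing \cite{neveda} and \cite{cheby}) and $\dim_B S=0$ in the strongly hyperbolic case $k=1$. Your extra paragraph justifying the inertness of the logarithmic factor re-derives what the paper simply quotes as \cite[Theorem 2]{cheby}, and your exponent bookkeeping matches the stated formulas in all cases.
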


Along the loop, we have two distinct directions with different rates of growth of Poincar\' e maps: transversals through and not through the saddle. The asymptotics is slower through the saddle. This is new, compared to the focus and limit cycle case treated in \cite{belgproc}, and similar to degenerate and weak foci cases \cite{arxivlana}. We check here if the saddle influences the box dimension of the trajectory, that is, if the accumulation of density of the trajectory lies at the saddle or away from the saddle. 
\smallskip

Let $x_0$ be an initial point lying \emph{close} to the loop and let $S(x_0)$ denote the spiral trajectory with the initial point $x_0$, accumulating at the loop.
\begin{theorem}[Box dimension of the spiral trajectory around a saddle loop]\label{codim}
Let $k\geq 1$ be the codimension of the saddle loop. Then
$$
\dim_B\big(S(x_0)\big)=\begin{cases}
2-\frac{2}{k},& \text{$k$ even},\\
2-\frac{2}{k+1},& \text{$k$ odd}.\end{cases}
$$
\end{theorem}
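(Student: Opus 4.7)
The plan is to decompose $S(x_0)$ into a piece lying in a small corner square around the saddle vertex and a compact piece bounded away from it, compute the box dimension of each via an appropriate version of the flow-box theorem, and glue them using the finite stability of upper box dimension together with monotonicity.

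Away from the saddle the flow-box theorem provides a bilipschitz conjugacy between a tubular neighborhood of any compact regular arc of the loop and a parallel flow on a rectangle. Under this conjugacy the portion of $S(x_0)$ inside the tube is bilipschitz equivalent to the Cartesian product of an interval and the discrete orbit of the Poincar\'e map on a transversal not through the saddle. Since box dimension is bilipschitz invariant and the Cartesian product formula adds one to the dimension of the factor, the box dimension of this piece equals $1+\dim_B$ of the transversal orbit, which by the first column of Corollary~\ref{kor} is exactly $2-2/k$ if $k$ is even and $2-2/(k+1)$ if $k$ is odd. Because this piece is a subset of $S(x_0)$, monotonicity already gives the matching lower bound on $\dim_B(S(x_0))$.

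For the upper bound the corner piece must be analyzed separately, and a naive Cartesian product argument would overshoot: the transversal orbit through the saddle has a strictly larger one-dimensional box dimension than the away counterpart, as the second column of Corollary~\ref{kor} shows. The remedy is the flow-hyperbola version of the flow-box theorem, Lemma~\ref{fh}: after normalizing coordinates so that the separatrices are the axes and the linear part is $(x,-ry)$, each arc of $S(x_0)$ crossing the corner is bilipschitz equivalent to a segment of the hyperbola $y\,x^{r}=c_n$, where $c_n\to 0$ is the sequence of heights at which the spiral crosses the vertical transversal $\tau_2$. Lemma~\ref{dim} then computes the box dimension of this hyperbola union and shows that it equals, rather than exceeds, the dimension obtained for the regular piece, so finite stability closes the upper bound.

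The main technical obstacle is precisely Lemma~\ref{dim}, which requires estimating $|\{y\,x^r=c_n\}_\varepsilon|$ uniformly in $\varepsilon$. Intuitively the hyperbolas spread apart as they move away from the corner, so their $\varepsilon$-neighborhood grows no faster than in the parallel-flow case, but turning this intuition into a bound demands splitting the unit square into three regions: a tail near the origin where all sufficiently small hyperbolas fuse into a single $\varepsilon$-disk, a middle belt where consecutive hyperbolas are spaced at distance comparable to $\varepsilon$ and merge pairwise into thin tubes, and an outer region where the arcs are well separated and contribute disjoint $\varepsilon$-tubes. Summing the three contributions against the decay of $c_n$ predicted by Theorem~\ref{saddlepoinc} is exactly what produces the exponents $2-2/k$ and $2-2/(k+1)$, and is the core calculation underlying the theorem.
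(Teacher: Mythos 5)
Your proposal follows essentially the same route as the paper: the same decomposition into a corner piece and a regular piece, the flow-box theorem plus the Cartesian product formula for the regular part, Lemma~\ref{fh} to straighten the corner arcs into the hyperbola family $\mathcal{H}_{r,S}$, Lemma~\ref{dim} for its box dimension, and finite stability (with monotonicity for the lower bound) to conclude. The argument is correct and matches the paper's proof in all essential respects.
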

Note that the dimension of a trajectory from Theorem~\ref{codim} is the product dimension \emph{around any transversal not passing through the saddle} (by Flow-box theorem, see Section~\ref{one}). Given the finite stability property of box dimension, the accumulation at the saddle is obviously not dense enough to carry the box dimension.
\smallskip

\emph{Sketch of the proof.} We compute the box dimension dividing the trajectory in two parts, $S(x_0)=S_1(x_0)\cup S_2(x_0)$, and using the finite stability property of box dimension. $S_1(x_0)$ is the non-regular part of the spiral trajectory near the saddle, between the transversals $\{x=1\}$ and $\{y=1\}$ (a family of hyperbolas). $S_2(x_0)$ is the remaining regular part of the trajectory. We first compute the box dimension of $S_1(x_0)$. The main tools are Lemma~\ref{fh} and Lemma~\ref{dim} stated below. First, in Lemma~\ref{fh}, we show that $S_1(x_0)$ can, by a bilipschitz mapping, be transformed to a simpler\footnote{see Definition \ref{para}} family of hyperbolas, intersecting the transversals at the points with the same asymptotics as the original family of hyperbolas. Then, in Lemma~\ref{dim}, we compute the box dimension of a family of hyperbolas parametrized by a discrete set with the known box dimension on a transversal. To compute the box dimension of the regular part $S_2(x_0)$, we can directly apply the \emph{Flow-box theorem} from Section~\ref{one}. In detail, the proof is given in Section~\ref{three}.
\medskip

The statement of the following Lemma~\ref{fh} is the \emph{analogon} of the flow-box theorem, but for a flow near a singular point whose portrait is a family of hyperbolas. It concerns \emph{straightening} the original family to a simpler family of hyperbolas.
\begin{lemma}[\emph{Orbital} linearization]\label{fh}
Let $X$ be an analytic vector field with a saddle at the origin:
\begin{align}\label{field}
\begin{cases}
\dot{x}&=x+P(x,y),\\
\dot{y}&=-r\cdot y +Q(x,y),\ r>0.
\end{cases}
\end{align}
Then there exists a neighborhood of the origin and a local diffeomorphism acting quadrant-wise, that transforms the phase portrait of \eqref{field} to the phase portrait of its linear part. 
\end{lemma}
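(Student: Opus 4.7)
The plan is to reduce the phase portrait of \eqref{field} to that of its linear part through three successive simplifications, each of which preserves trajectories as point sets. \textbf{Step 1 (straightening the separatrices).} The stable and unstable manifolds of an analytic hyperbolic saddle are analytic curves tangent to the eigendirections, which here are the coordinate axes. An analytic change of coordinates tangent to the identity places them exactly on the axes; for instance, if these manifolds are $y=\phi(x)$ and $x=\psi(y)$, the map $(x,y)\mapsto(x-\psi(y),\,y-\phi(x))$ works. Invariance of the axes then forces $P(x,y)=x\tilde P(x,y)$ and $Q(x,y)=y\tilde Q(x,y)$ with $\tilde P,\tilde Q$ analytic and vanishing at the origin, so the field becomes $\dot x=x(1+\tilde P)$, $\dot y=-y(r-\tilde Q)$. \textbf{Step 2 (time rescaling).} Multiplying by the positive analytic factor $1/(1+\tilde P(x,y))$ preserves every trajectory as a set and yields
\begin{align*}
\dot x=x,\qquad \dot y=-y\mu(x,y),\qquad \mu(0,0)=r.
\end{align*}

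\textbf{Step 3 (quadrant-wise trajectory matching).} I would build $\Phi$ on each quadrant separately. Work in the first quadrant; the remaining three are handled by reflection and time reversal. Trajectories of the reduced system entering a small coordinate box at the origin cross the transversal $\{y=1\}$ at a unique point $(s,1)$, $s\in(0,1]$. By smooth dependence on initial conditions together with transversality of the flow to $\{y=1\}$, the chart $\psi_{\mathrm{nl}}(s,t)=\bigl(se^t,\,y(t;s)\bigr)$, where $y(\,\cdot\,;s)$ is the unique solution of the reduced $y$-equation satisfying $y(0;s)=1$, is a smooth diffeomorphism from an open subset of $(0,1]\times[0,\infty)$ onto the open first-quadrant neighbourhood of the origin. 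The linear system admits the analogous chart $\psi_{\mathrm{lin}}(s,t)=(se^t,e^{-rt})$. Setting $\Phi:=\psi_{\mathrm{lin}}\circ\psi_{\mathrm{nl}}^{-1}$ gives a diffeomorphism of the open quadrant that by construction sends trajectories of \eqref{field} to hyperbolas $yx^r=\mathrm{const}$, and one extends $\Phi$ to the separatrices as the identity.

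The hard part will be controlling $\Phi$ across the separatrices. Away from the axes smoothness is immediate from the flow-box theorem, but at the axes the trajectories degenerate ($t\to\pm\infty$) and one must compare the Dulac corner maps of the two flows: on $\{x=1\}$ the restriction of $\Phi$ matches the nonlinear Dulac map $D(s)$ against the linear Dulac $s\mapsto s^r$, so extending $\Phi$ continuously to the corner reduces to the standard fact that $D(s)\sim s^r$, which is a consequence of the asymptotics recalled in \eqref{poinc} together with the decomposition $P=R\circ D$ with $R$ a local diffeomorphism. Since the application in Lemma~\ref{dim} only uses a bilipschitz comparison of hyperbolic families, the weaker quadrant-wise regularity this construction provides is enough.
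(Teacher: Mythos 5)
Your Steps 1 and 2 are fine, and your Step 3 is in spirit the same construction as the paper's: both match the trajectory of the (reduced) field through $(s,1)$ with the linear hyperbola through $(s,1)$ by a vertical projection parametrized by the common $x$-coordinate. But the proof has a genuine gap exactly where you flag "the hard part": the regularity of $\Phi$ at the separatrices and at the corner is the \emph{entire content} of the lemma, and you dispose of it by appealing to $D(s)\sim s^r$. That asymptotic statement controls only the one-dimensional trace of $\Phi$ on a single transversal $\{x=1\}$; it says nothing about the two-dimensional behaviour of $\Phi$ as the trajectories flatten onto the axes ($t\to\pm\infty$ along each orbit, with passage time diverging as $s\to 0$). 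In particular it does not give continuity of $D\Phi$ up to the axes, nor even a Lipschitz bound, and without a bilipschitz (let alone diffeomorphic) extension the map is useless for the box-dimension computation in Theorem~\ref{codim} — and it does not prove the lemma as stated, which asserts a local diffeomorphism. Your closing remark that "weaker quadrant-wise regularity \ldots is enough" assumes a bilipschitz comparison that you have not established.

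The paper closes this gap by first invoking Roussarie's finite $C^k$-normal form (Theorem 13 of \cite{roussarie}), so that in the resonant case only the resonant monomials $(x^py^q)^i$ survive and the substitution $u=x^py^q$ decouples the system into $\dot x=x$, $\dot u=\sum_{i=2}^N a_iu^i$. The phase curves can then be written down \emph{explicitly}, $u(t,u_0)=u_0+\sum_{i=2}^N g_i(t)u_0^i$ with $g_i$ polynomials of degree $\le i-1$, and the resulting closed formula \eqref{eff} for $F$ admits the quantitative bounds $|g_i(t)|\le C_it^{i-1}$, $|g_i'(t)|\le D_it^{i-2}$. These bounds are what allow one to verify by direct computation that $F$, extended by the identity on the axes, is $C^1$ on the whole square with $DF(0,0)=I$, so the inverse function theorem applies at the origin. (The nonresonant case is dispatched separately, since there the field is already $C^k$-linearizable.) If you want to complete your argument along your own lines, you must replace the appeal to $D(s)\sim s^r$ by a uniform-in-$s$ estimate comparing $y(t;s)$ with $e^{-rt}$ and their derivatives over the full (unbounded) passage interval; in practice this forces you back to a normal form in which the solutions are explicitly computable, which is precisely the paper's route.
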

Recall the $C^r$-normal forms for hyperbolic saddles stated in \cite[Theorem 13]{roussarie}. The field \eqref{field} above is not necessarily $C^1$-linearizable in the sense of $C^1$-conjugacy of fields, but its phase portrait is, by Lemma~\ref{fh}, $C^1$-equivalent to the phase portrait of its linear part. It is a weaker statement than $C^1$-conjugacy of fields, see e.g. \cite{ilya} or \cite{teyssier} for precise definitions.
\medskip 

\begin{definition}[Family $\mathcal H_{r,S}$]\label{para}
Let $r>0$. By $$\mathcal{H}_{r,S}=\{(x,y)\in (0,1]\times (0,1]\ |\ x^r y=c,\ c\in S\},\ S\subset\mathbb R,$$ we denote a family of hyperbolas \footnote{a set of \emph{level curves} of function $F(x,y)=x^r y$} parametrized by $S\subset\mathbb{R}$. 
\end{definition}
Note: The family $\mathcal H_{r,S}$ in Definition~\ref{para} consists of \emph{integral curves of a linear field} in the first quadrant:
\begin{align*}
\begin{cases}
\dot{x}&=x,\\
\dot{y}&=-r\cdot y.
\end{cases}
\end{align*}
Note that the set $S$ and $S^{1/r}=\{c^{1/r}\,|\,c\in S\}$ respectively consist of points on the transversal $\{x=1\}$ and $\{y=1\}$ respectively, where the family of hyperbolas $\mathcal H_{r,S}$ intersects those transversals.

\begin{lemma}[Box dimension of $\mathcal H_{r,S}$]\label{dim}
Let \footnote{Every mention of the interval $(0,1]$ can be replaced by interval $(0,\delta]$, $\delta>0$. The analysis is local, in a neighborhood of the singular point (the origin). Here, $(0,1]\times(0,1]$ is taken just for simplicity.}$S\subset\mathbb (0,1]$ be a sequence of points accumulating at the origin, with \footnote{except at most finitely many first distances} distances between the points eventually decreasing.
The box dimension of the family of hyperbolas $$\mathcal{H}_{r,S}=\{(x,y)\in(0,1]\times (0,1]\ |\ x^r y=c,\ c\in S\}$$ is equal to
\begin{align*}\label{flowhyp}
\dim_B(\mathcal{H}_{r,S})&=\begin{cases}1+\dim_B S,& r\leq 1,\\1+\dim_B (S^{1/r}),& r>1.\end{cases}
\end{align*}
\end{lemma}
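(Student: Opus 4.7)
The plan is to reduce to the case $r \leq 1$ via a bilipschitz symmetry, get the lower bound by pulling back a Cartesian product along an explicit straightening diffeomorphism, and get the upper bound by estimating $|(\mathcal H_{r,S})_\varepsilon|$ through a split into a ``resolved'' and a ``tail'' part.

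First, the involution $\sigma(x, y) = (y, x)$ is bilipschitz on $(0, 1]^2$ and sends the hyperbola $x^r y = c$ to $y^r x = c$, equivalently $x^{1/r} y = c^{1/r}$, so $\sigma(\mathcal H_{r, S}) = \mathcal H_{1/r, S^{1/r}}$. By bilipschitz invariance of $\dim_B$, these two sets share the same dimension, so it suffices to prove $\dim_B \mathcal H_{r, S} = 1 + \dim_B S$ for $r \leq 1$; the case $r > 1$ then follows by applying this with $r' = 1/r < 1$ to $\mathcal H_{1/r, S^{1/r}}$. For the lower bound, restrict to the strip $A = \{x \in [1/2, 1]\} \cap (0, 1]^2$. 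The map $\Phi(x, y) = (x, x^r y)$ has Jacobian determinant $x^r \in [2^{-r}, 1]$ on $A$, hence is a bilipschitz diffeomorphism onto its image. For every $c \in S$ with $c \leq 2^{-r}$ the hyperbola segment $\{(x, c/x^r) : x \in [1/2, 1]\}$ lies in $A$ and is sent by $\Phi$ to the horizontal segment $[1/2, 1] \times \{c\}$. Hence $\Phi(\mathcal H_{r, S} \cap A)$ contains $[1/2, 1] \times (S \cap (0, 2^{-r}])$, which has box dimension $1 + \dim_B S$ by the Cartesian product formula (since $S$ and $S \cap (0, 2^{-r}]$ differ by finitely many points). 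Bilipschitz invariance and monotonicity give $\dim_B \mathcal H_{r, S} \geq 1 + \dim_B S$.

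For the upper bound, pick a threshold $\rho = \rho(\varepsilon) \to 0$ and split $\mathcal H_{r, S}$ into a resolved part, the union of hyperbolas $x^r y = c$ with $c > \rho$, and a tail, the union of those with $c \leq \rho$. The resolved part consists of finitely many hyperbolas lying in $\{x \geq \rho^{1/r}\}$, where $\Phi$ is bilipschitz with constants depending on $\rho$, so its $\varepsilon$-neighborhood has area $\lesssim \varepsilon \cdot N(S \cap (\rho, 1], \varepsilon)$. The tail is contained in $\{x^r y \leq \rho\} \cap (0, 1]^2$, whose area equals $\int_0^1 \min(1, \rho/x^r)\, dx$; for $r \leq 1$ this is $O(\rho)$, with a logarithmic factor when $r = 1$, while for $r > 1$ it behaves like $\rho^{1/r}$, which is exactly why the statement switches to $S^{1/r}$ in that regime. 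Using the eventual monotonicity of gaps in $S$ to obtain $N(S, \varepsilon) \leq \varepsilon^{-\dim_B S + o(1)}$, and choosing $\rho$ to balance the two contributions, yields $|(\mathcal H_{r, S})_\varepsilon| \lesssim \varepsilon^{1 - \dim_B S + o(1)}$ and hence $\overline{\dim}_B \mathcal H_{r, S} \leq 1 + \dim_B S$.

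The main obstacle is the upper bound: $\Phi$ degenerates at the $y$-axis, so no single bilipschitz argument works globally, and the accumulation of hyperbolas near the origin must be estimated directly through the tail-area integral above. The balancing between the resolved count and the tail area, together with the extraction of the precise $\varepsilon^{o(1)}$ factors, relies on the hypothesis that the gaps in $S$ are eventually decreasing: this guarantees that $|S_\varepsilon|$ genuinely behaves like $\varepsilon^{1 - \dim_B S}$ from both sides, and thus forces $\underline{\dim}_B \mathcal H_{r, S} = \overline{\dim}_B \mathcal H_{r, S}$ equal to the claimed value.
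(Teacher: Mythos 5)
Your proof is correct and follows essentially the same route as the paper's: a product/straightening argument along the transversal $\{x=1\}$ for the lower bound, a tail--nucleus splitting of the $\varepsilon$-neighborhood (your ``resolved''/``tail'') with the area under the critical hyperbola supplying a logarithmic factor that is absorbed into the exponent for the upper bound, and the coordinate swap $(x,y)\mapsto(y,x)$ to reduce $r>1$ to $r\leq 1$. The only cosmetic difference is that the paper first symmetrizes to $r=1$ via $u=x^r$ and works between the diagonal and $\{x=1\}$, whereas you integrate $\int_0^1\min(1,\rho/x^r)\,dx$ directly for general $r\leq 1$.
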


Note that, by Lemma~\ref{dim}, the box dimension of the family $H_{r,S}$ is in fact the bigger of the two box dimensions of parallel flows at transversals $\{x=1\}$ and $\{y=1\}$. The accumulation at the saddle point is not what prevails in box dimension. The accumulation of density is not at the vertex.
\medskip

We illustrate the product statement of Lemma~\ref{dim} on Figure~\ref{hipi} below. In the figure, $r>1$. The family of hyperbolas $\{x^r y=c,\ c\in S\}$, intersects the transversals $\{x=1\}$ and $\{y=1\}$ in one-dimensional \emph{sequences} $S$ and $S^{1/r}$ respectively. Box dimension of the sequence $S^{1/r}$ is bigger than the box dimension of the sequence $S$ ($x^r$ for $r>1$ is a Lipschitz map). The accumulation of density is therefore around the horizontal transversal $\{y=1\}$, and the set of hyperbolas takes the product box dimension around $\{y=1\}$, that is,
$$
\dim_B(\mathcal{H}_{r,S})=\dim_B (S^{1/r}) +1.
$$
\begin{figure}[h]
\begin{center}
  \includegraphics[width=3in]{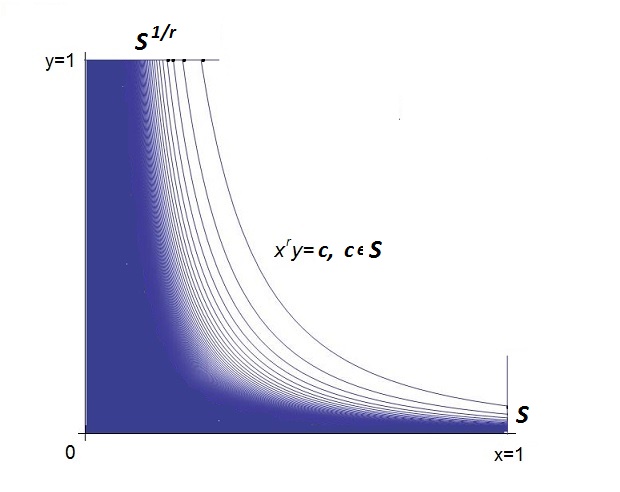}
  \caption{\small{Family $\mathcal{H}_{r,S}$ of hyperbolas in Lemma~\ref{dim}, $r>1$.}}\label{hipi}
  \vspace{0.5cm}
  \end{center}
\end{figure}
\smallskip

In Example~\ref{staro}, we give examples of sequences $S$ generated by Poincar\' e maps on transversals of hyperbolic saddle loops.
\begin{example}\label{staro}
Let $g:(0,\delta]\to (0,\delta],$ $\delta>0$, and let $f=id-g$. For $0<s_0<\delta$, we consider the \emph{orbit} of $g$ with initial point $s_0$:
$$S=\{s_n=g^{\circ n}(s_0)|\ n\in\mathbb{N}_0\}\subset(0,\delta].$$ In cases $(1)$ and $(2)$ below, the fixed point $s=0$ is \emph{parabolic}, and in case $(3)$ it is \emph{weakly or strongly hyperbolic.} A saddle loop with the ratio of hyperbolicity of the saddle $r>1$ has Poincar\' e maps of strongly hyperbolic type. If the ratio is $r=1$, the Poincar\' e maps are either of parabolic or of weakly hyperbolic type, see \cite[Chapter 5]{roussarie}. 
\begin{enumerate}
\item Let
 $$
 f(s)\simeq s^{\alpha},\ s\to 0;\ \ \alpha\geq 1.
 $$
 By \cite[Theorem 1]{neveda},
 \begin{equation*} s_n\simeq n^{-\frac{1}{\alpha-1}},\ n\to\infty,\quad \dim_B S=1-\frac{1}{\alpha}.\end{equation*}
 \item Let
 $$f(s)\simeq s^{\alpha}(-\log s),\ s\to 0;\ \ \alpha>1.$$
By \cite[Theorem 2]{cheby}, $$\dim_B S=1-\frac{1}{\alpha}.$$
\item Let \begin{align*} g(s)&=k s+o(s),\ 0<k<1, \text{\ \ or\ \ \quad} \\
g(s)&=C s^\beta+o(s^\beta),\ \beta>1,\ C>0,\ \ s\to 0.\end{align*}
The orbit $S$ accumulates at zero exponentially fast. There exist $\gamma\in(0,1)$ and $C>0$ such that
\begin{equation*}
0<s_n<C \gamma^n. 
 \end{equation*}
By \cite[Lemma 1, Theorem 5]{neveda}, $$\dim_B S=0.$$
 \end{enumerate}
 \end{example}

\section{Proofs of main results}\label{three}

\noindent\emph{Proof of Theorem~\ref{saddlepoinc}.}
We mimic the proof of Poincar\' e asymptotics on a transversal not passing through the saddle from \cite[Section 5]{roussarie}, adapted for transversals through the saddle. Since the asymptotics is invariant to conjugacy, it suffices to work with finite $C^k$-normal forms of the vector field from \cite[Theorem 13]{roussarie}, $k\in\mathbb{N}$.

Let $\tau$ denote a transversal through the saddle. For simplicity, we take the diagonal $\tau\equiv\{y=x\}$. By $D_1$ and $D_2$, we denote the transition maps from the horizontal transversal $\{y=1\}$ to $\tau$ and from $\tau$ to the vertical transversal $\{x=1\}$. Then, $D=D_2\circ D_1$ is the \emph{saddle transition map}. Let $R$ denote the \emph{regular loop transition}. The Poincar\' e map on $\tau$ is given by the following composition:
$$
P=D_1\circ R\circ D_2.
$$

Let $r=1$. By \cite{roussarie}, after change of variables to $(x,u)$, $u=xy,$
\begin{equation}\label{rou}
\begin{cases}
\dot{x}&=x,\\
\dot{u}&=\sum_{i=2}^{\infty}\alpha_i u^i.
\end{cases}
\end{equation}
From \eqref{rou} it can be computed that $D(s)=u(-\ln s,s)$, see \cite[Section 5.1]{roussarie} for details. Similarly, we compute:
\begin{align}\label{fact}
&D_2(s)=u(-\ln\frac{s}{\sqrt 2},\frac{s^2}{2}),\\
&\frac{D_1(s)^2}{2}=u\left(\ln\frac{D_1(s)}{s\sqrt 2},s\right).\nonumber
\end{align}
By \cite[Theorem 14]{roussarie}, $$u(\omega,s)=s+a s^k \omega+h.o.t.,\ a\in\mathbb{R},\ k\in\mathbb N,\ k\geq 2.$$ Here, $\omega$ is a logarithmic monomial and the \emph{higher order terms} are meant with respect to a lexicographic order imposed on monomials $s^m \omega^n,\ m,\,n\in\mathbb{N}$. Hence, $$D(s)=s+as^k(-\log s)+h.o.t.$$

Suppose that $R(s)=a_1s+a_2 s^l+o(s^l)$ is of order $l$, $l\geq 2$. The Poincar\' e map $P_1=R\circ D$ on the transversal $\{y=1\}$ obviously has the asymptotics: \begin{equation}\label{c1}P_1(s)=a_1s+\footnote{Here, \emph{$min$} is meant in the sense: of the  lower order with respect to lexicographic order.}\min\{a_2s^l,aa_1s^k(-\ln s)\}+h.o.t.\end{equation}
Furthermore, by \eqref{fact}, we get:
\begin{align*}
&D_2(s)=\frac{s^2}{2}+\frac{a}{2^{k}} s^{2k}(-\ln s)+h.o.t.,\\[0.1cm]
&D_1(s)=\sqrt 2 s^{1/2}+\frac{a}{2\sqrt 2}s^{k-1/2}(-\ln s)+h.o.t.
\end{align*}
Componing $P=D_1\circ R\circ D_2$, we get that \begin{equation}\label{c2}P(s)=a_1s+\min\{a_2a_1 2^{-l}\cdot s^{2l-1},aa_1 2^{-k}\cdot s^{2k-1}(-\ln s)\}+h.o.t.\end{equation} Compare \eqref{c1} and \eqref{c2}, and the statement of the lemma for $r=1$ follows.
\medskip

Now let $r>1$. If the saddle is \emph{nonresonant} ($r\notin \mathbb Q$), it is by \cite[Theorem 13]{roussarie} locally $C^k$-linearizable, $k\in\mathbb{N}$. The linear system
$$
\begin{cases}
&\dot{x}=x,\\
&\dot{y}=-ry
\end{cases}
$$
has an explicit solution $x(t)=Ce^t$, $y(t)=De^{-rt}$. Using explicit formulas, we easily get that $$D(s)\sim s^r,\ D_1(s)\sim s^{\frac{r}{r+1}},\ D_2(s)\sim s^{r+1},\ s\to 0.$$ Therefore, $P_1(s)\sim s^r,\ P(s)\sim s^r,\ s\to 0$.

On the other hand, let the saddle be \emph{resonant} ($r>1$, $r\in\mathbb Q$). We put $r=p/q$. Similarly as in the first case, after the change of coordinates to $(x,u)$, $u=x^p y^q$, we compute $$u(\omega,s)=s+a s^k \omega+h.o.t.,\ k\in\mathbb N,\ k\geq 2.$$ Since $D(s)^q=u(-\ln s,s^p)=s^p+o(s^p)$, it follows that $D(s)\sim s^r,\ s\to 0$. We compute $$D_2(s)=u\Big(-\ln\frac{s}{\sqrt 2},\left(\frac{s} {\sqrt 2}\right)^{p+q}\Big)^{1/q}\sim s^{r+1},\ s\to 0.$$ From $D=D_2\circ D_1$, we have that  $D_1(s)\sim s^{\frac{r}{r+1}},\ s\to 0$. Thus, $$P_1(s)\sim s^r,\ P(s)\sim s^r,\ s\to 0.$$
\hfill $\Box$
\medskip

\noindent\emph{Proof of Corollary~\ref{kor}.} It follows directly from Theorem~\ref{saddlepoinc} and Example~\ref{staro}.\hfill$\Box$
\medskip

\noindent \emph{Proof of Lemma~\ref{fh}.} Let $X$ have a \emph{nonresonant} saddle at the origin, $r\notin \mathbb{Q}$. By \cite[Theorem 13]{roussarie}, such a vector field is linearizable in a neighborhood of the origin: it is even diffeomorphically conjugated to its linear part. This proves the statement in the nonresonant case. 

We prove here the more complicated, \emph{resonant} case. Although not necessarily linearizable,  we show that the phase portrait is quadrant-wise diffeomorphically equivalent to the phase portrait of the linear part. Let $X$ have a resonant saddle at the origin, with $r\in\mathbb{Q}_+^*$. Let $r=\frac{p}{q}$ with $p,\ q\in\mathbb{N},\ (p,q)=1$. By Theorem 13 in \cite{roussarie}, there exists an integer $N\in \mathbb{N}$, the coefficients $a_2,\ldots,a_{N+1}\in\mathbb{R}$, and a neighborhood of the origin, such that $X$ is diffeomorphically conjugated to the finite polynomial vector field
\begin{align}\label{polin}
\begin{cases}
\dot{x}&=x,\\
\dot{y}&=-r\cdot y+\frac{1}{q}\sum_{i=1}^{N}a_{i+1}\cdot (x^p y^q)^i\cdot y.
\end{cases}
\end{align}

We construct a diffeomorphism $F(x,y)$, acting quadrant-wise in a neighborhood of the origin, which sends phase curves of field \eqref{polin} to phase curves of its linear part. We show here the construction of $F^{I}(x,y)$ in the first quadrant. Afterwards we glue functions $F^{I,II,III,IV}$ constructed in each quadrant to a global diffeomorphism $F$ at the origin.

We proceed as in \cite[5.1.2]{roussarie} (a similar technique was used there for obtaining the Dulac map at the resonant saddle). We solve the system \eqref{polin} by substitution $u=x^p y^q$, whereas we get the system
\begin{align}\label{pomo}
\begin{cases}
\dot{x}&=x,\\
\dot{u}&=\sum_{i=2}^{N} a_i u^i.
\end{cases}
\end{align} 

Solving \eqref{pomo} by expanding $u(t,u_0)$ in series with respect to the initial condition $u_0$, we get that
\begin{equation}\label{ju}
u(t,u_0)=u_0+\sum_{i=2}^{N} g_i(t) u_0^{i}.
\end{equation}
The form of $g_i(t)$, $i\geq 2$, is described in Proposition 10 in \cite{roussarie}: $g_i(t)$ are polynomials in $t$, of degree at most $i-1$. Therefore, we easily obtain the bounds:
\begin{equation}\label{bd}
|g_i(t)|\leq C_i t^{i-1},\ |g_i'(t)|\leq D_i t^{i-2};\ \ C_i,\ D_i>0,\quad i=2,\ldots,N,
\end{equation}
for $t$ sufficiently big.

Let $\tau_1\equiv \{y=1\}$ be a horizontal transversal to the saddle. We should in fact consider the transversal at some small height $\delta>0$ instead at height $1$, but the computations are the same. 
Using \eqref{ju} and $u=x^p y^q$, we can now derive the formula for the phase curve of the cut-off field \eqref{polin}, passing through the initial point $(x(0),y(0))=(s,1)\in\tau_1$. We put $u_0=s^p$ in \eqref{ju} and solve $\dot x=x$. We get that $t=\log\frac{x}{s}$, and then, for the phase curve through $(s,1)$, we have the formula:
\begin{equation}\label{s1}
y=\frac{s^r}{x^r}\big(1+\sum_{i=2}^{N}g_i(\log\frac{x}{s})s^{p(i-1)}\big)^{1/q},\ s\leq x\leq 1.
\end{equation}
The phase curve of the linear part passing through $(s,1)$ is, on the other hand, given by
\begin{equation}\label{s2}
y=\frac{s^r}{x^r},\ s\leq x\leq 1.
\end{equation}

We now define the mapping $F^{I}$ of the first quadrant $(0,1]\times(0,1]$ to itself, sending phase curves of \eqref{polin} to phase curves of the linear part in the following manner. For any point $(x,y)\in(0,1]\times(0,1]$ close to the saddle,  there exists a unique phase curve of the linear field passing through it, and it is determined by the point $(s,1)$ on $\tau_1$. We consider the phase curve of \eqref{polin} passing through the same point $(s,1)$. $F^{I}(x,y)$ is then defined as the orthogonal projection of $(x,y)$ on this phase curve. More precisely, by \eqref{s1} and \eqref{s2}, we get the formula for $F^I(x,y)$:
\begin{align}\label{ef}
F^{I}(x,y)=&\left(\ x,\ y\cdot \Big(1+\sum_{i=2}^{N}g_i\big(\log(y^{-1/r})\big)\cdot x^{p(i-1)}\cdot y^{q(i-1)}\Big)^{1/q} \right),\\ &\hspace{6.5cm} (x,y)\in(0,1]\times(0,1].\nonumber
\end{align}
Function $F^{I}$ is obviously well-defined, continuous and differentiable in $(0,1]\times(0,1]$. 
We can obtain similar formulas for $F^{II,III,IV}$ in other quadrants. In the first and in the second quadrant, in \eqref{s2}, we have
$y=s^r/x^r$. In the third and the fourth quadrant, we have $y=-s^r/x^r$. By glueing the quadrants together, we get function $F$, defined on the unit square without the coordinate axes, by the formula
\begin{align}\label{eff}
F(x,y)=\bigg(x,\,y&\Big(1+\sum_{i=2}^{N}g_i\big(\log(|y|^{-1/r})\big)\,x^{p(i-1)}y^{q(i-1)}\Big)^{1/q}\bigg),\nonumber\\
&(x,y)\in\big([-1,1]\times[-1,1]\big)\setminus \big(\{x=0\}\cup \{y=0\}\big).
\end{align}
It can be checked that the functions can be extended to the coordinate axes to a continuously differentiable function $F(x,y)$ defined on the square $[-1,1]\times [-1,1]$. 
Then we apply the inverse function theorem at the origin. We conclude that $F$ is a local diffeomorphism. Moreover, by construction, it leaves the axes invariant and maps each quadrant to itself.

Let us extend $F$ from \eqref{eff} to the coordinate axes, and check that the obtained function is continuously differentiable in $[-1,1]\times[-1,1]$. We define $$F(x,0):=(x,0),\ x\in[-1,1],\ \ F(0,y):=(0,y),\ y\in[-1,1].$$ Note that $F$ cannot be extended continuously simply by formula \eqref{ef} to $y=0$ due to the logarithmic term. However, using \eqref{bd}, $\lim_{y\to 0}F(x,y)=0,\ x\in\mathbb R$. Therefore, extended as above, $F$ is continuous on $[-1,1]\times [-1,1]$. 

Furthermore, $F$ given by \eqref{eff} is differentiable on $\big([-1,1]\times[-1,1]\big)\setminus \big(\{x=0\}\cup \{y=0\}\big)$. We can show, by direct computation of the differential and using bounds \eqref{bd}, that:
\begin{align}
&DF(x,y)=\left[\begin{array}{cc}1& 0\\\partial_x F_2(x,y)&\partial_y F_2(x,y)\end{array}\right],\ x\neq 0,\ y\neq 0,\label{limi}\\[0.3cm]
&\ \lim_{y\to 0} \partial_y F_2(x,y)=1,\ \ \lim_{x\to 0}\partial_y F_2(x,y)=1,\nonumber\\
&\ \lim_{y\to 0} \partial_x F_2(x,y)=0,\ \ \lim_{x\to 0}\partial_x F_2(x,y)=G(y).\nonumber
\end{align}
Here, 
$$
G(y)=\begin{cases}
\frac{1}{q}\cdot y^{q+1} \cdot g_2(\log |y|^{-q}),& p=1,\\
0,& p>1.
\end{cases}
$$

Obviously, $G(y)\to 0$ as $y\to 0$. We can check directly by definition of differentiability at $(x,0)$ and $(0,y)$, using bounds \eqref{bd} and \eqref{limi}, that $F$ extended to the axes in the above manner is continuously differentiable at the axes and that the differential is given by 
\begin{equation}\label{differe}
DF(x,y)=\begin{cases}\text{\eqref{limi}},&(x,y)\in\big([-1,1]\times[-1,1]\big)\nonumber\\
&\qquad \qquad \quad \setminus \big(\{x=0\}\cup \{y=0\}\big),\\[1mm]
\left[\begin{array}{cc}1& 0\\0&1\end{array}\right],&x\in[-1,1],\ y=0,\\[5mm]
\left[\begin{array}{cc}1& 0\\G(y)&1\end{array}\right],&x=0,\ y\in[-1,1].
\end{cases}
\end{equation}
Note in the course of the proof that $C^1$ is the best class that we can obtain applying bounds \eqref{bd}.
\hfill $\Box$

\medskip

\noindent\emph{Proof of Lemma~\ref{dim}}. The idea is similar to the proof of the box dimension of a Cartesian product in the plane. There, the area of the $\varepsilon$-neighborhood of a $2$-dimensional product $U\times [0,1]$ can be expressed using the length of the $\varepsilon$-neighborhood of the $1$-dimensional set $U$. 

Let $r\leq 1$. The family $\mathcal{H}_{r,S}$ is a family of phase curves (in the first quadrant) of the linear saddle with ratio of hyperbolicity $r$. By \emph{Flow-box theorem} from Section~\ref{one}, we get that the box dimension of $\mathcal{H}_{r,S}$ on a small rectangle around $\{x=1\}$ is equal to $1+\dim_B S$. Around $\{y=1\}$, the box dimension is equal to $1+\dim_B (S^{1/r})$. Since $y=x^{1/r}$, $r\leq 1$, is a Lipschitz map, we have that $\dim_B (S^{1/r})\leq \dim_B S$. By the monotonicity property of box dimension, we get
\begin{equation}\label{prod}
\underline{\dim_B}(\mathcal{H}_{r,S}),\ \overline{\dim_B}(\mathcal{H}_{r,S})\geq 1+\dim_B S.
\end{equation}

If $r=1$, the hyperbolas are symmetric with respect to the diagonal $y=x$. If $r<1$, we first \emph{symmetrize} the family by  the change $u=x^r,\ v=y$, which is inverse-Lipschitz.  In the new coordinate system $(u,v)$, we get the symmetric family of hyperbolas $\mathcal{H}_{1,S}=\{uv=c|\ c\in S\}$. We compute the box dimension of the \emph{symmetrized} family $\mathcal{H}_{1,S}$, and conclude that $$\dim_B\mathcal{H}_{r,S}\leq \dim_B\mathcal{H}_{1,S}.$$

The box dimension of $\mathcal{H}_{1,S}$ is, by symmetry and by finite stability property of box dimension, equal to the box dimension of its subset between the diagonal $y=x$ and the transversal $\{x=1\}$.

We now estimate the area of the $\varepsilon$-neighborhood of $\mathcal{H}_{1,S}$ in this area, establishing the almost product relation with the length of the $\varepsilon$-neighborhood of one-dimensional set $S$ on the transversal $\{x=1\}$. The intersection points of $\mathcal{H}_{1,S}$ and the diagonal $\{y=x\}$ form the set $\sqrt 2 S^{1/2}$ on the diagonal. The distances of the intersecting points on the transversal $\{x=1\}$ are smaller than the distances of corresponding points on any other transversal in the area, including the diagonal (at least close to $0$, considered on $(0,\delta]\times (0,\delta]$, $\delta>0$).

We compute the area dividing the $\varepsilon$-neighborhood into tail and nucleus. By \emph{tail} of the $\varepsilon$-neighborhood, $T_\varepsilon^2$, we mean the disjoint neighborhoods of the first finitely many hyperbolas. The remainder of the $\varepsilon$-neighborhood, where the neighborhoods of hyperbolas start overlapping, is called the \emph{nucleus}, and denoted $N_\varepsilon^2$. For the idea of division in tail and nucleus, see \cite{tricot}. Since the distances are the smallest on transversal $\{x=1\}$, the critical index separating the tail and the nucleus is the same as for one-dimensional set $S$ on $\{x=1\}$. Let $T_\varepsilon^1$, $N_\varepsilon^1$ denote the tail and the nucleus (in dimension one) of its $\varepsilon$-neighborhood. We now bring $T_\varepsilon^1$ and $T_\varepsilon^2$ and $N_\varepsilon^1$ and $N_\varepsilon^2$ into direct relation, and express the box dimension of $\mathcal{H}_{1,S}$ by the box dimension of $S$.

Since the lengths of hyperbolas are bounded from above and below in $(0,\delta]\times (0,\delta]$, we get that\footnote{The notation $\simeq$ stands for: $f\simeq g$,\ as $x\to 0$, if there exist $A,\ B>0$ and $\delta>0$, such that $$A\leq\frac{f(x)}{g(x)}\leq B,\ 0<x\leq \delta.$$} 
\begin{equation}\label{svv}
A(T_\varepsilon^2)
\simeq|T_\varepsilon^1|+\varepsilon\cdot |T_\varepsilon^1|\simeq |T_\varepsilon^1|,\ \varepsilon\to 0.
\end{equation}
By definition of box dimension, it holds that 
\begin{equation}\label{sv}
\lim_{\varepsilon\to 0}\frac{|T_\varepsilon^1|}{\varepsilon^{1-\dim_B S-\delta}}=0,\quad  \lim_{\varepsilon\to 0}\frac{|N_\varepsilon^1|}{\varepsilon^{1-\dim_B S-\delta}}=0,\ \text{ for all }\delta>0. 
\end{equation}
By \eqref{svv} and \eqref{sv}, we get 
\begin{equation}\label{prv}
\lim_{\varepsilon\to 0}\frac{A(T_\varepsilon^2)}{\varepsilon^{1-\dim_B S-\delta}}=\lim_{\varepsilon\to 0}\frac{A(T_\varepsilon^2)}{\varepsilon^{2-(\dim_B S+1)-\delta}}=0,\ \text{ for all }\delta>0.
\end{equation}

For the nucleus, we give an upper bound on the area of the $\varepsilon$-neighborhood. The hyperbola separating the tail and the nucleus has the equation $uv=|N_\varepsilon^1|$. The 
area $A(N_\varepsilon^2)$ is smaller than or equal to the area between the $u$-axis, the hyperbola $H_{n_\varepsilon}$, the diagonal $y=x$ and the transversal $\{x=1\}$:
\begin{align*}
A(N_\varepsilon^2)&\leq C|N_\varepsilon^1|+D\varepsilon+\int_{|N_\varepsilon^1|^{1/2}}^{1}\frac{|N_\varepsilon^1|}{y}\ dy,\ \varepsilon<\varepsilon_0.
\end{align*}
Take any fixed $\delta_0>0$. There exists a small $\nu>0$, such that $\delta_0-\nu>0$. Integrating, it holds that there exist $C>0$ and $\varepsilon_0$, such that
\begin{align*}
\frac{A(N_\varepsilon^2)}{\varepsilon^{1-\dim_B S-\delta_0}}&\leq \frac{C|N_\varepsilon^1|(-\log\varepsilon)}{\varepsilon^{1-\dim_B S-\delta_0}}\leq \frac{C|N_\varepsilon^1|}{\varepsilon^{1-\dim_B S-(\delta_0-\nu)}}\cdot\varepsilon^{\nu}(-\log\varepsilon),\ \ \varepsilon<\varepsilon_0.
\end{align*} 
Passing to limit as $\varepsilon\to 0$ in the above inequality and using \eqref{sv}, we get that
\begin{equation}\label{drug}
\lim_{\varepsilon\to 0}\frac{A(N_\varepsilon^2)}{\varepsilon^{1-\dim_B S-\delta}}=\lim_{\varepsilon\to 0}\frac{A(N_\varepsilon^2)}{\varepsilon^{2-(s+1)-\delta}}=0, \text{ for all } \delta>0.
\end{equation}
By \eqref{prv} and \eqref{drug}, it follows that $\underline{\dim_B}(\mathcal{H}_{1,S}),\ \overline{\dim_B}(\mathcal{H}_{1,S})\leq 1+s$ for the \emph{symmetrized} family. 

It follows that $\underline{\dim_B}(\mathcal{H}_{r,S})\leq \underline{\dim_B}(\mathcal{H}_{1,S})\leq 1+s.$ The same for the upper box dimension. Using \eqref{prod}, we finally get that, for $r\geq 1$,
$$
\dim_B(\mathcal{H}_{r,S})=1+s.
$$
 
Now let $r>1$. Then, $1/r<1$. We have
\begin{align*}
\mathcal{H}_{r,S}=&\{(x,y)\in(0,1]\times (0,1]\ |\ x^r y =c,\ c\in S\}=\\
=&\{(x,y)\in(0,1]\times (0,1]\ |\ y^{1/r} x =c,\ c\in S^{1/r}\}.
\end{align*}
Changing the roles of $x$ and $y$ and substituting $S$ for $S^{1/r}$, by the first part of the proof we get that
$$\dim_B (\mathcal{H}_{r,S})=1+\dim_B (S^{1/r}).$$ 
\hfill $\Box$

\medskip

\noindent\emph{Proof of Theorem~\ref{codim}}.
Let the saddle loop be of codimension $k$, $k\in\mathbb{N}$. Without loss of generality, suppose that the ratio of hyperbolicity is $r\geq 1$. The repelling case $r<1$ can be considered as attracting by rescaling the time variable $t\leftrightarrow -rt$, whereas the phase portrait (and so the box dimension of a trajectory) remains the same. Take a spiral trajectory $S(x_0)$ accumulating at the loop, with $x_0$ close to the loop. We divide the trajectory in two parts: the part $S_1(x_0)$ passing the saddle vertex, lying in the square $(0,1]\times(0,1]$, and the remaining regular part $S_2(x_0)$ around the saddle connection. 

We first compute $\dim_B(S_1(x_0))$. The asymptotics of the Poincar\' e map $s\mapsto P(s)$ on the transversal $\{x=1\}$ is given by \eqref{poinc}. Let $S\subset (0,1)$ denote the orbit of $P$ with initial point $s_0\in(0,1]$ -- one of the points on $\{x=1\}$ where $S(x_0)$ intersects it. By Lemma~\ref{fh}, by a local diffeomorphism, the arcs of $S_1(x_0)$ can be transformed to a family $\mathcal{H}_{r,S}=\{(x,y)\in(0,1]\times (0,1]\,:\,x^r y=c,\ c\in S\}$ of countably many hyperbolas. They intersect $\{x=1\}$ at \footnote{By definition, only the density of accumulation of a trajectory on the loop is important for the box dimension of $S(x_0)$; the first \emph{finitely many} windings 
can therefore be \emph{neglected}.}$S$. The box dimension of $\mathcal H_{r,S}$ is given in Lemma~\ref{dim}, using Corollary~\ref{kor} for the box dimension of $S$. Since a local diffeomorphism is a bilipschitz map, we have:
$$
\dim_B(S_1(x_0))=\dim \mathcal{H}_{r,S}=\begin{cases}
2-\frac{2}{k},& \text{$k$ even},\\
2-\frac{2}{k+1},& \text{$k$ odd}.\end{cases}
$$

It is left to compute the dimension of the remaining, regular part $S_2(x_0)$ of the trajectory. In this area, there are no singularities of the vector field. Therefore we directly apply the \emph{Flow-box theorem} \cite{kuzne}, see Section~\ref{one}, and Corollary~\ref{kor}. The box dimension of $S_2(x_0)$ is computed as the box dimension of Cartesian product $S\times[0,\ell],\ \ell>0$: 
$$\dim_B(S_2(x_0))=\begin{cases}
1+(1-\frac{2}{k})=2-\frac{2}{k},& \text{$k$ even},\\
1+(1-\frac{2}{k+1})=2-\frac{2}{k+1},& \text{$k$ odd}.\end{cases}
$$
Finally, by \emph{finite stability property} of the box dimension, the result follows.\hfill $\Box$

\section{Applications}\label{four}

\subsection{The cyclicity of a hyperbolic saddle loop.}\label{fourone}

In recognizing the cyclicity of monodromic limit periodic sets for planar systems, one can use fractal analysis of its trajectories. By \emph{fractal analysis}, we assume analysing the lengths of the $\varepsilon$-neighborhoods of orbits of Poincar\' e maps, as functions of $\varepsilon>0$. Also, analysing the areas of the $\varepsilon$-neighborhoods of trajectories accumulating to the set. The \emph{box dimension} of a set is, by its definition in Section~\ref{one}, related to the asymptotic behavior of the Lebesgue measure of the $\varepsilon$-neighborhood of the set, as $\varepsilon\to 0$. The relation between the box dimension and cyclicity was first investigated in \cite{buletin},\ \cite{belgproc} and later in \cite{cheby}. In \emph{weak focus} and \emph{limit cycle} cases (Poincar\' e maps \emph{differentiable} at $0$) there exists a bijective correspondence between the box dimension of any orbit of the Poincar\' e map and the cyclicity of a set. Further, the bijective correspondence was established between the cyclicity and the box dimension of a spiral trajectory around a focus or a limit cycle, using \emph{Flow-box theorem} or its appropriate adaptation, see \cite{belgproc}. 

The paper \cite{cheby} was motivated by the attempt to do the same for \emph{hyperbolic saddle polycycles}, in particular for the simplest \emph{saddle loop}. Here, the Poincar\' e maps contain logarithmic monomials and are \emph{nondifferentiable} at $0$. Thus, comparison of the asymptotic behavior of the lengths of the $\varepsilon$-neighborhoods to powers of $\varepsilon$ in the very definition of the box dimension is imprecise. See \cite[p. 2502]{cheby} or Example~\ref{staro}\,(1),\,(2) and note that the box dimension reveals the cyclicity ambigously. 

The asymptotic expansion of Poincar\' e map on a transversal through the saddle vertex in Theorem~\ref{saddlepoinc} shows that, although slightly different, it carries exactly the same information as Poincar\' e maps on any other transversal. Further, the box dimension of a two-dimensional spiral trajectory accumulating on the loop exhibits the same deficiency in revealing cyclicity, precisely formulated in Proposition~\ref{ta} below. This is not surprising, due to the fact that Lemmas~\ref{fh} and \ref{dim} develop a version of \emph{Flow-box theorem} adapted for hyperbola-type flow, thus implicitely showing that all dimension information is already contained on a transversal. \newpage

\begin{proposition}[Cyclicity of a saddle loop and the box dimension of its spiral trajectory]\label{ta}
Let $(\Gamma,X_\Lambda)$ be a \emph{generic}\footnote{By \emph{genericity}, we mean the following \emph{regularity condition} on the unfolding from \cite{joyal1} or \cite{mardesic}. Let Poincar\' e maps for the unfolding $(X_\Lambda)$ have the following asymptotic expansion, as $s\to 0$:$$P_\lambda(s)=\alpha_0(\lambda)u_0(s,\lambda)+\ldots+\alpha_k(\lambda)u_k(s,\lambda)+h.o.t.,\ \lambda\in\Lambda\subset\mathbb{R}^n.$$ Suppose $\Gamma$ is of codimension $k\leq N$, $P(s,\lambda_0)=\alpha_k(\lambda_0) u_k(x,\lambda_0)+h.o.t.$ That is, $k$ conditions are imposed on the loop ($\alpha_0(\lambda_0)=\ldots=\alpha_{k-1}(\lambda_0)=0$). The \emph{regularity condition} is that the matrix $$\Big(\frac{\partial\alpha_i(\lambda_0)}{\partial\lambda_j}\Big)_{i=0,\ldots,k-1;\ j=1,\ldots,k}$$ is of maximal rank $k$. That is, by the implicit function theorem, the coefficients $\alpha_0(\lambda),\ldots,\alpha_{k-1}(\lambda)$ can be freely chosen in $X_\Lambda$. Under this assumption on the unfolding, the cyclicity of the loop is exactly equal to its codimension. Without this assumption, it could be smaller.} analytic unfolding of a (monodromic) hyperbolic saddle loop $\Gamma$. Let $S(x_0)$ be a spiral trajectory with initial point $x_0$ accumulating at $\Gamma$, and $$\dim_B(S(x_0))=d\in[1,2).$$ By the box dimension, the cyclicity of the loop is not uniquely determined. More precisely, 
$$
Cycl(\Gamma,X_\Lambda)\in \left \{\frac{2}{2-d}-1,\frac{2}{2-d}\right \}.
$$
\end{proposition}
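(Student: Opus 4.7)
The plan is to invert the formula for $\dim_B(S(x_0))$ provided by Theorem~\ref{codim} and then invoke the genericity hypothesis, which identifies cyclicity with codimension.

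First, Theorem~\ref{codim} distinguishes two regimes according to the parity of the codimension $k \geq 1$ of $\Gamma$: namely $d = 2 - 2/k$ when $k$ is even and $d = 2 - 2/(k+1)$ when $k$ is odd. Solving these two relations for $k$ yields $k = 2/(2-d)$ in the even case and $k = 2/(2-d) - 1$ in the odd case. A quick check shows that these two candidate codimensions are consecutive integers: writing $k_{\mathrm{odd}} = 2j-1$ and $k_{\mathrm{even}} = 2j$ for some $j \geq 1$, both produce the same box dimension $d = 2 - 1/j$. Hence the map $k \mapsto \dim_B(S(x_0))$ is two-to-one, pairing each odd codimension with its even successor, and from the numerical value of $d$ alone one can only conclude that $k$ belongs to the pair $\{2/(2-d) - 1,\ 2/(2-d)\}$.

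Next, I would invoke the regularity condition on the unfolding $(X_\Lambda)$ spelled out in the footnote of the proposition. That condition (compare \cite{joyal1}, \cite{mardesic}) guarantees via the implicit function theorem that the coefficients $\alpha_0(\lambda), \ldots, \alpha_{k-1}(\lambda)$ can be chosen freely near the bifurcation parameter $\lambda_0$, so that the standard theory of hyperbolic saddle loop bifurcations (cf. \cite{roussarie}) yields $\mathrm{Cycl}(\Gamma, X_\Lambda) = k$. Inserting the two possible values of $k$ determined in the previous step gives
\[
\mathrm{Cycl}(\Gamma, X_\Lambda) \in \left\{\frac{2}{2-d} - 1,\ \frac{2}{2-d}\right\},
\]
which is the claimed inclusion.

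I do not anticipate a genuine technical obstacle: the argument is an algebraic inversion of the formula in Theorem~\ref{codim} followed by an appeal to the codimension-equals-cyclicity principle under the regularity hypothesis. The informative content of the statement is really its \emph{negative} half, namely that the two-to-one collapse cannot be refined using box dimension alone. This reflects the structural observation stressed throughout the paper: the box dimension is a power-scale invariant and therefore blind to the logarithmic monomials which, according to Theorem~\ref{saddlepoinc} and \eqref{poinc}, are precisely what distinguishes the asymptotic expansions at codimensions $2j-1$ and $2j$.
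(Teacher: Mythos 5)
Your proposal is correct and is essentially the paper's own proof: the paper likewise inverts the two parity cases of Theorem~\ref{codim} to obtain the candidate codimensions $\frac{2}{2-d}-1$ and $\frac{2}{2-d}$, and then identifies codimension with cyclicity via the genericity (regularity) assumption. Your additional check that the odd codimension $2j-1$ and its even successor $2j$ both yield $d=2-\frac{1}{j}$ is a worthwhile explicit verification of the two-to-one collapse, but it does not change the argument.
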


\begin{proof}
By Theorem~\ref{codim}, a loop $\Gamma$ with $\dim_B(S(x_0))=d$ may be of codimension either $\frac{2}{2-d}-1$ or $\frac{2}{2-d}$. Under genericity assumption on the unfolding, the codimension is equal to the cyclicity.
\end{proof}

\subsection{Prospects: the box dimension of leaves of a foliation of complex resonant saddles}\label{fourtwo}

We consider germs of holomorphic vector fields in $\mathbb{C}^2$, with complex resonant saddles at the origin. That means, with the linear part of the form:
\begin{equation*}
\begin{cases}
\dot{z}=z,\\
\dot{w}=-r\cdot w,\ \ r\in\mathbb{Q}_+^*.
\end{cases}
\end{equation*}
Here, $r\in\mathbb{Q}_+^*$ is the saddle \emph{hyperbolicity ratio}.  Put $r=\frac{p}{q},\ (p,q)=1.$ 

Such germs are either formally orbitally linearizable or formally orbitally equivalent\footnote{Induced foliations are formally conjugated.} to the germ
\begin{equation}\label{haj}
\begin{cases}
\dot z=z,\\
\dot w=w(-r+\frac{u^{k+1}}{1+\lambda u^k}),\ \ u=z^p w^q,
\end{cases}
\end{equation}
for some $k\in\mathbb{N},\ \lambda\in\mathbb{C}$. This result can be looked up in e.g. \cite[Sections 1,\,2,\,22]{ilya}, \cite[Chapters 4,\,5]{loray}, \cite{teyssier}. 

Motivated by \cite{resman}, where the formal type of a germ of an analytic diffeomorphism is read from fractal properties of one orbit, our goal is to see if the \emph{box dimension of a leaf of a foliation} can reveal the \emph{formal type} of the saddle.
\smallskip

Let $L_{a}$ denote a leaf of a foliation through a point $a\in\mathbb{C}^2$ sufficiently close to the saddle. Let $\tau_1=\{w=w_0\}$ and $\tau_2=\{z=z_0\}$ denote a (two-dimensional) horizontal and a vertical cross-section. Let $h_w(z)$ denote the holonomy map induced by $L_a$ on $\tau_1$ and $h_z(w)$ on $\tau_2$. By \cite{ilya}, around each cross-section, a leaf of a foliation has a locally parallel structure (it is a bilipschitz image of a family of unit complex discs).
\smallskip

With respect to formal linearizability, there are two types of resonant saddles, see \cite[Lemma 22.2]{ilya}:
\begin{enumerate}
\item \emph{Formally orbitally linearizable saddles.} It holds that $$h_z^{\circ q}=id,\ h_w^{\circ p}=id.$$
Orbits of holonomy maps on cross-sections consist of finitely many points. The box dimension of orbits is thus equal to $0$. By product structure, we conclude that the box dimension of $L_a$ locally around each cross-section is equal to 2.
\smallskip

\item \emph{Formally orbitally nonlinearizable saddles.} The iterates of the holonomy maps are complex parabolic germs,
$$\ \ h_z^{\circ q}(w)=w+a_1w^{kq}+h.o.t.,\ \ h_w^{\circ p}(z)=z+b_1z^{kp}+h.o.t.$$
Orbits of $h_z,\ h_w$, denoted $S^{h_z}$ and $S^{h_w}$, consist of finitely many ($q,\ p$) disjoint orbits of $h_z^{\circ q}$, $h_w^{\circ p}$ respectively. By \cite{resman} and by finite stability property of box dimension,
$$
\dim_B\left(S^{h_z}\right)=1-\frac{1}{kq+1},\ \ \dim_B\left(S^{h_w}\right)=1-\frac{1}{kp+1}.
$$
Here, $k,\ p,\ q$ are as in \eqref{haj}. By product structure, we conclude that the box dimension of $L_a$ locally around cross-section $\tau_1$ is equal to $3-\frac{1}{kq+1}$. Locally around $\tau_2$, it is equal to $3-\frac{1}{kp+1}$. 
\end{enumerate}
\smallskip

\noindent By monotonicity property of box dimension, under the assumption that box dimension exists, we get:
\begin{equation}\label{bound}
\dim_B(L_a)\geq \begin{cases}
2,& \text{for case $(1)$},\\
\max\left\{3-\frac{1}{kq+1},\ 3-\frac{1}{kp+1}\right\},&\text{for case $(2)$}.
 \end{cases}
\end{equation}

To verify the other side of the inequality \eqref{bound}, we need to compute the box dimension of a leaf in a small neighborhood of the origin, where the product structure is lost. This is left for future research. Motivated by results of Lemma~\ref{dim} for the planar saddle, we conjecture that, also in the complex case, we have equality in \eqref{bound}.

\end{document}